\documentclass{article}

\usepackage{arxiv}

\usepackage[utf8]{inputenc} 
\usepackage[T1]{fontenc}    
\usepackage{hyperref}       
\usepackage{url}            
\usepackage{booktabs}       
\usepackage{amsfonts}       
\usepackage{nicefrac}       
\usepackage{microtype}      
\usepackage{cleveref}       
\usepackage{graphicx}
\usepackage{natbib}
\usepackage{doi}

\usepackage{amsthm,amssymb,float}
\usepackage{color}
\theoremstyle{definition}
\newtheorem{theorem}{Theorem}[subsection]
\newtheorem{definition}[theorem]{Definition}
\newtheorem{example}[theorem]{Example}
\newtheorem{lemma}[theorem]{Lemma}
\newtheorem{prop}[theorem]{Proposition}
\newtheorem{corollary}[theorem]{Corollary}
\newtheorem{question}[theorem]{Question}

\newcommand{\R}{\mathbb{R}}
\newcommand{\N}{\mathbb{N}}
\newcommand{\Z}{\mathbb{Z}}
\newcommand{\SR}{\textnormal{SR}}

\usepackage{tikz,tkz-euclide,calc,xfp,lipsum,xstring,mathdots}
\newcommand{\OrigTop}[4]{
    \tkzDefShiftPoint[#1](120:#2){\fpeval{#1 * 2 + 3}}
    \tkzDefShiftPoint[#1](60:#2){\fpeval{#1 * 2 + 4}}
    \IfEqCase{#3}{
    {fill}{
    \tkzDrawPolygon[fill = gray!20](#1,\fpeval{#1 * 2 + 3},\fpeval{#1 * 2 + 4})
    }
    {nofill}{\tkzDrawPolygon(#1,\fpeval{#1 * 2 + 3},\fpeval{#1 * 2 + 4})}
    }
    \tkzDrawPoints(\fpeval{#1 * 2 + 3},\fpeval{#1 * 2 + 4})
    \IfEqCase{#4}{
    {end}{
    \tkzLabelPoint[above left](\fpeval{#1 * 2 + 3}){$\ddots$}
    \tkzLabelPoint[above right](\fpeval{#1 * 2 + 4}){$\iddots$}
    }
    {mid}{}
    }
}
\newcommand{\OrigRight}[4]{
    \tkzDefShiftPoint[#1](0:#2){\fpeval{#1 * 2 + 3}}
    \tkzDefShiftPoint[#1](-60:#2){\fpeval{#1 * 2 + 4}}
    \IfEqCase{#3}{
    {fill}{
    \tkzDrawPolygon[fill = gray!20](#1,\fpeval{#1 * 2 + 3},\fpeval{#1 * 2 + 4})
    }
    {nofill}{\tkzDrawPolygon(#1,\fpeval{#1 * 2 + 3},\fpeval{#1 * 2 + 4})}
    }
    \tkzDrawPoints(\fpeval{#1 * 2 + 3},\fpeval{#1 * 2 + 4})
    \IfEqCase{#4}{
    {end}{
    \tkzLabelPoint[above right](\fpeval{#1 * 2 + 3}){$\iddots$}
    \tkzLabelPoint[below](\fpeval{#1 * 2 + 4}){$\vdots$}
    }
    {mid}{}
    }
}

\newcommand{\OrigLeft}[4]{
    \tkzDefShiftPoint[#1](180:#2){\fpeval{#1 * 2 + 3}}
    \tkzDefShiftPoint[#1](240:#2){\fpeval{#1 * 2 + 4}}
    \IfEqCase{#3}{
    {fill}{
    \tkzDrawPolygon[fill = gray!20](#1,\fpeval{#1 * 2 + 3},\fpeval{#1 * 2 + 4})
    }
    {nofill}{\tkzDrawPolygon(#1,\fpeval{#1 * 2 + 3},\fpeval{#1 * 2 + 4})}
    }
    \tkzDrawPoints(\fpeval{#1 * 2 + 3},\fpeval{#1 * 2 + 4})
    \IfEqCase{#4}{
    {end}{
    \tkzLabelPoint[above left](\fpeval{#1 * 2 + 3}){$\ddots$}
    \tkzLabelPoint[below](\fpeval{#1 * 2 + 4}){$\vdots$}
    }
    {mid}{}
    }
}

\newcommand{\FlipBottom}[3]{
    \tkzDefShiftPoint[#1](-60:#2){\fpeval{#1 * 2 + 3}}
    \tkzDefShiftPoint[#1](-120:#2){\fpeval{#1 * 2 + 4}}
    \IfEqCase{#3}{
    {fill}{
    \tkzDrawPolygon[fill = gray!20](#1,\fpeval{#1 * 2 + 3},\fpeval{#1 * 2 + 4})
    }
    {nofill}{\tkzDrawPolygon(#1,\fpeval{#1 * 2 + 3},\fpeval{#1 * 2 + 4})}
    }
    \tkzDrawPoints(\fpeval{#1 * 2 + 3},\fpeval{#1 * 2 + 4})
}

\newcommand{\FlipLeft}[3]{
    \tkzDefShiftPoint[#1](-180:#2){\fpeval{#1 * 2 + 3}}
    \tkzDefShiftPoint[#1](120:#2){\fpeval{#1 * 2 + 4}}
    \IfEqCase{#3}{
    {fill}{
    \tkzDrawPolygon[fill = gray!20](#1,\fpeval{#1 * 2 + 3},\fpeval{#1 * 2 + 4})
    }
    {nofill}{\tkzDrawPolygon(#1,\fpeval{#1 * 2 + 3},\fpeval{#1 * 2 + 4})}
    }
    \tkzDrawPoints(\fpeval{#1 * 2 + 3},\fpeval{#1 * 2 + 4})
}

\newcommand{\FlipRight}[3]{
    \tkzDefShiftPoint[#1](0:#2){\fpeval{#1 * 2 + 3}}
    \tkzDefShiftPoint[#1](60:#2){\fpeval{#1 * 2 + 4}}
    \IfEqCase{#3}{
    {fill}{
    \tkzDrawPolygon[fill = gray!20](#1,\fpeval{#1 * 2 + 3},\fpeval{#1 * 2 + 4})
    }
    {nofill}{\tkzDrawPolygon(#1,\fpeval{#1 * 2 + 3},\fpeval{#1 * 2 + 4})}
    }
    \tkzDrawPoints(\fpeval{#1 * 2 + 3},\fpeval{#1 * 2 + 4})
}

\newcommand{\OrigRightNoDots}[4]{
    \tkzDefShiftPoint[#1](0:#2){\fpeval{#1 * 2 + 3}}
    \tkzDefShiftPoint[#1](-60:#2){\fpeval{#1 * 2 + 4}}
    \IfEqCase{#3}{
    {fill}{
    \tkzDrawPolygon[fill = gray!20](#1,\fpeval{#1 * 2 + 3},\fpeval{#1 * 2 + 4})
    }
    {nofill}{\tkzDrawPolygon(#1,\fpeval{#1 * 2 + 3},\fpeval{#1 * 2 + 4})}
    }
    \tkzDrawPoints(\fpeval{#1 * 2 + 3},\fpeval{#1 * 2 + 4})
    \IfEqCase{#4}{
    {end}{
    \tkzLabelPoint[below](\fpeval{#1 * 2 + 4}){$\vdots$}
    }
    {mid}{}
    }
}

\title{On Sum Ranges for $3n$-convergence}

\newif\ifuniqueAffiliation 
\uniqueAffiliationtrue

\ifuniqueAffiliation
\author{ {Preston Martens} \\
	Department of Mathematics\\
	Iowa State University\\
	Ames, IA, 50010 \\
	\texttt{pmart29@iastate.edu} \\
    }
\renewcommand{\headeright}{}
\renewcommand{\undertitle}{}
\renewcommand{\shorttitle}{}

\hypersetup{
pdftitle={On Sum Ranges for 3n-convergence},
pdfsubject={q-math.FA},
pdfauthor={Preston Martens},
pdfkeywords={Sum Range, 3n-convergence, Rearrangement},
}

\begin{document}
\maketitle

\begin{abstract}
	The Riemann Rearrangement Theorem (RRT) tells us that commutativity of infinite series can differ from finite series. We wish to extend the notion of infinite series rearrangements to weaker forms of convergence, such as partial series convergence on every 2nd or 3rd index, called $2n$- or $3n$-convergence. In the case of $2n$-convergence, it has been shown that the sum range, along with the standard cases in the RRT, can produce a shifted additive subgroup of reals. In this paper, we show cases where the $3n$-sum range is still a subgroup and a case where the $3n$-sum range cannot be a subgroup, proving the existence of more depth for $3n$-convergence.
\end{abstract}

\keywords{Sum range \and 3n-convergence \and Rearrangement}

\section{Introduction}
The Riemann Rearrangement Theorem (or RRT) states that infinite sums do not necessarily obey commutativity like finite sums. In particular, it can be stated as follows:
\begin{theorem}[Riemann Rearrangement Theorem]
    Let $A = (a_n)_{n \in \N}$ be a sequence of real numbers and $\sum A$ be its sum.
    \begin{enumerate}
        \item[(1)] If $\sum A$ is absolutely convergent, then for any permutation $\pi : \N \to \N$, $\sum_{n=1}^\infty a_{\pi(n)} = x$.
        \item[(2)] If $\sum A$ is conditionally convergent, then for any $y \in \R$, there is a permutation $\pi$ such that $\sum_{n=1}^\infty a_{\pi(n)} = y$.
    \end{enumerate}
\end{theorem}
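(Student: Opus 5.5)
For part (1), I would write $x=\sum_{n=1}^\infty a_n$ and show that every rearrangement converges to this same $x$. Fix a permutation $\pi$ and $\varepsilon>0$. By absolute convergence, choose $N$ with $\sum_{n>N}|a_n|<\varepsilon/2$ and $\left|\sum_{n=1}^N a_n - x\right|<\varepsilon/2$. Since $\pi$ is a bijection, there is $M$ with $\{1,\dots,N\}\subseteq\{\pi(1),\dots,\pi(M)\}$. For $m\ge M$, the partial sum $\sum_{k=1}^m a_{\pi(k)}$ equals $\sum_{n=1}^N a_n$ plus a finite sum of terms $a_j$ with $j>N$. That extra sum has absolute value at most $\sum_{n>N}|a_n|<\varepsilon/2$. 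Hence $\left|\sum_{k=1}^m a_{\pi(k)}-x\right|<\varepsilon$, which gives $\sum a_{\pi(n)}=x$.

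For part (2), I would use the greedy construction. Split the indices into $P=\{n: a_n\ge 0\}$ and $Q=\{n:a_n<0\}$, and write $p_1,p_2,\dots$ and $q_1,q_2,\dots$ for the corresponding terms in their original order. Since $\sum a_n$ converges, $a_n\to 0$. Since the convergence is conditional, $\sum p_i=+\infty$ and $\sum q_i=-\infty$. This holds because if either sum were finite, the convergence of $\sum a_n$ would force the other to be finite too, and then $\sum|a_n|$ would converge. Both facts use only that $\sum|a_n|=\sum p_i-\sum q_i$ and $\sum a_n=\sum p_i+\sum q_i$ on partial sums. If one of $P$ or $Q$ is finite, the corresponding divergence claim already fails, so both are infinite.

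Given the target $y$, I would build $\pi$ inductively. Take unused positive terms, in order, until the running sum first exceeds $y$. Then take unused negative terms, in order, until it first drops below $y$, and repeat. Each stage ends after finitely many steps because the remaining positive and negative tails still diverge to $\pm\infty$; removing finitely many terms does not change divergence. Every index is eventually used, since each stage consumes at least one new term from each sign class. Thus $\pi$ is a bijection of $\N$, with indices of zero terms placed among the positives.

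The main obstacle is showing that the partial sums converge to $y$. The key estimate is this: at the $k$-th switching point, the partial sum differs from $y$ by at most the absolute value of the last term added, which is some $p_i$ or $q_j$. Between switching points, the partial sums move monotonically between consecutive overshoots. So every partial sum after the $k$-th switch is within the largest of the last two overshoot terms of $y$. Those terms go to $0$ because $a_n\to0$ and the indices of the terms used tend to infinity. Therefore $\sum a_{\pi(n)}=y$.
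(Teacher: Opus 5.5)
Your proof is correct. The paper does not prove this theorem. It quotes the Riemann Rearrangement Theorem as classical background, so there is no paper argument to compare against. What you give is the standard textbook proof: a tail estimate from absolute convergence for (1), and for (2) the greedy oscillation around $y$, driven by $\sum p_i=+\infty$, $\sum q_i=-\infty$ and $a_n\to 0$.

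One small point of precision. When $y<0$, the first positive stage takes no terms, since the running sum $0$ already exceeds $y$. If you instead force that stage to take one term, its overshoot is not bounded by the last term added. Either way, the claims ``each stage consumes at least one new term'' and ``the error at a switching point is at most the last term added'' hold only from the second stage on. This affects only finitely many partial sums, so neither the bijectivity of $\pi$ nor the limit is affected.
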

We call the set of numbers that the series $\sum A$ can be rearranged to its \emph{sum range}, denoted $\SR(A)$. In the first case, $\SR(A) = \{x\}$, and in the second case, $\SR(A) = \R$.

\subsection{2n-convergence}
Sum ranges have been studied for series in other spaces, such as Banach spaces\cite{C-G-K,K-K} and Hilbert spaces\cite{Hadwiger} as well as other notions of convergence, such as $A$-summability \cite{L-Z}, statistical convergence \cite{Fast}, and $C_1$ convergence \cite{B-E}. This work in particular builds on convergence in $\R$ along a filter. Given a sequence $A = (a_n)_{n \in \N}$ with associated sum $\sum A$, we say $\sum A$ converges along the filter $\mathcal{F} = (I_k)_{ \in \N}$ if $\lim_{k \to \infty}\sum_{n=1}^{I_k} a_n$ converges. We start by examining the case where $(I_k)_{k \in \N} = (2k)_{k \in \N}$ and provide an appropriate definition.
\begin{definition}
    We say a series $\sum A = \sum_{n=1}^{\infty} a_n$ \emph{$2n$-converges} to $x \in \R$ if $\lim_{k \to \infty}\sum_{n=1}^{2k}a_n = x$.
\end{definition}
We proceed with a simple example to illustrate the benefits of $2n$-convergence.
\begin{example}
    The so-called Grandi series
    \[\mathcal{G} = 1 + (-1) + 1 + (-1) + \dots\]
    diverges traditionally. But on every even index, the partial sum is always 0. Therefore, along this $2n$-filter, the series converges to 0. Now, we can try to permute this series as in the RRT. In doing so, notice that this series converges only if the tail of the sequence cancels in alternating pairs of 1 and $(-1)$. So we are allowed to prepend an even number of terms, the sum of which must be even. This results in the \emph{2n-sum range}, denoted $\SR_2(\mathcal{G})$, being equal to $2\Z$.
\end{example}

In the example above, the $2n$-sum range has a different form than what is given by the RRT. In particular, $2\Z$ is an additive subgroup of the reals. In fact, Y. Dybskiy and K. Slutsky found that a shifted additive subgroup structure is the only other option for a sum that $2n$-converges aside from the classical results of the RRT\cite{D-S}.
\begin{theorem}[Classification of the $2n$-sum range]
    Let $\sum A = \lim_{k \to \infty} \sum_{n=1}^{2k} a_n$ be a series of real numbers that $2n$-converges to $x$. Then $\SR_2(A)$ is one of the following:
    \begin{enumerate}
        \item[(1)] The whole of $\R$;
        \item[(2)] The singleton $\{x\}$;
        \item[(3)] A shifted additive subgroup of the form $r + \Gamma$, where $r \in \R$ and $\Gamma \subseteq \R$ is any countable additive subgroup. Note that case (2) is a special case where $r = x$ and $\Gamma = \{0\}$.
    \end{enumerate}
\end{theorem}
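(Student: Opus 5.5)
We reduce the $2n$-convergent series to an ordinary series of vectors in $\R^2$ and then follow the Steinitz-type argument behind the RRT. Pair consecutive terms as $b_k = (a_{2k-1}, a_{2k})$. Then $\sum A$ $2n$-converges to $x$ exactly when $\sum_k (a_{2k-1}+a_{2k})$ converges to $x$. Partial sums over the even indices are unchanged, so we may shift any odd-length prefix by one term. A rearrangement $\pi$ then reorganises the multiset of values $\{a_n\}$ into new consecutive pairs.

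The plan has three steps, in order:
\begin{enumerate}
\item[(i)] Show that $\SR_2(A)$ is always closed under the operation $s \mapsto s + d$ for certain ``defects'' $d$.
\item[(ii)] Identify the set $\Gamma$ of these defects as an additive subgroup.
\item[(iii)] Show that either $\Gamma$ is countable and $\SR_2(A) = r+\Gamma$, or the series is conditionally convergent in a strong sense and $\SR_2(A) = \R$.
\end{enumerate}

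\textbf{Setting up the split.} First we split according to whether $a_n \to 0$.

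If $a_n \to 0$, a $2n$-convergent series actually converges, since consecutive partial sums differ by $a_{2k+1}\to0$. This holds for every rearrangement as well, because the terms still tend to $0$. So $\SR_2(A)=\SR(A)$, and the RRT gives case (1) or case (2).

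If $a_n \not\to 0$, then for $2n$-convergence the terms must satisfy $a_{2k-1}+a_{2k}\to 0$ along the chosen pairing. Hence the ``large'' terms come in near-cancelling pairs. Fix $\varepsilon>0$ and let $L$ be the set of accumulation points of $(a_n)$ away from $0$. For any valid rearrangement, every value $c\in L$ must be matched, up to small errors, with $-c$ infinitely often.

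\textbf{Defining the defects.} We define $\Gamma$ as the set of all finite sums $\sum (c_i - c_i')$ that can be produced as follows. Take finitely many large terms that the original arrangement pairs in a certain way, and re-pair them with terms of other near-values. The result is a finite rearrangement of a prefix that changes the total by a telescoping quantity while keeping the tail pairs cancelling.

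More concretely, if $u,v$ are terms with $u+v$ small but we instead pair $u$ with a term $w$ near $-u'$, the sum shifts by approximately $u-u'$ plus tail adjustments. These differences are limits of differences of values of $(a_n)$. We expect $\Gamma$ to be the subgroup generated by differences of accumulation points in $L\cup(-L)$, together with whatever the conditionally convergent ``small'' part contributes.

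\textbf{Main step and obstacle.} The key step is a Steinitz-lemma argument in $\R^2$, or rather a one-dimensional one on the small terms, combined with bookkeeping on the large terms. The bookkeeping argument should show that any rearranged $2n$-limit is $x$ plus an element of $\Gamma$. It should also show that any element of $x+\Gamma$ is attained, by performing the finitely many swaps in a prefix and then letting the original pairing resume.

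Two cases follow from this:
\begin{enumerate}
\item[(a)] If the small terms (those with $|a_n|<\varepsilon$ eventually) form a conditionally convergent family, then by the RRT they can absorb any real shift, and $\SR_2(A)=\R$.
\item[(b)] Otherwise the small terms are absolutely summable. Then the sum is rigid up to the discrete re-pairings, and we obtain $r+\Gamma$.
\end{enumerate}
Note that $\Gamma$ is countable because it is generated by countably many values of the sequence.

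The hardest part will be the converse inclusion: showing that every rearrangement's limit lies in $r+\Gamma$. This requires controlling infinitely many possible re-pairings. First I would prove that in any $2n$-convergent rearrangement, all but finitely many pairs must be of the form $(u,v)$ with $u+v$ tending to $0$. I would then try to show that the accumulated deviation from the original pairing stabilises after a finite stage, using absolute summability of the small terms. This reduces the total difference to a finite telescoping sum of value differences, which lies in $\Gamma$.
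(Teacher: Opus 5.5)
The case $a_n\to 0$ is handled correctly. The gap is in the case split (a)/(b) and in defining $\Gamma$ through \emph{finitely many} re-pairings. The freedom in a $2n$-rearrangement does not come from the terms near $0$. It comes from re-pairing \emph{infinitely many} of the large, nearly cancelling terms.

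Take $a_{2k-1}=1$, $a_{2k}=-1+(-1)^k/k$. Every term has $|a_n|\ge 1/2$, so there are no small terms, and the only accumulation points are $\pm 1$. Your case (b) therefore predicts $\SR_2(A)\subseteq x+2\Z$. But keep the $1$'s in the odd places and put $a_{2\sigma(k)}$ in place $2k$. The $2K$-th partial sum is then $\sum_{k\le K}(-1)^{\sigma(k)}/\sigma(k)$, which the RRT steers to any real, so $\SR_2(A)=\R$.

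The same failure occurs even when the original pairs cancel exactly. Let $v_j=1+2^{-j}$ and build the sequence from pairs $(v_j,-v_j)$, each $j$ occurring infinitely often. Fix $J\subseteq\N$, pair one copy of $v_j$ with one copy of $-v_{j+1}$ for each $j\in J$, and match all other copies. The $2n$-limit is $\sum_{j\in J}2^{-j-1}$, so $\SR_2(A)\supseteq[0,1/2]$. Your $\Gamma$, generated by differences of the accumulation points $\pm v_j,\pm 1$, consists only of dyadic rationals. So both the inclusion $\SR_2(A)\subseteq x+\Gamma$ and your rigidity step (``the deviation from the original pairing stabilises after a finite stage'') are false as formulated. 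Here the small terms are empty, hence absolutely summable, yet infinitely many mismatched pairs with small nonzero sums move the limit.

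The reverse inclusion has the same defect. If values are not repeated, say $a_{2k-1}=c_k$, $a_{2k}=-c_k$, then moving $c_1,c_2$ into a prefix orphans $-c_1,-c_2$, and the original pairing can never ``resume.'' The tail must be re-paired by an infinite shift such as $c_{k+2}\leftrightarrow -c_k$. For $c_k=1+1/k^2$ this converges and gives $2$. For $c_k=1+1/k$, suitable infinite re-pairings $c_k\leftrightarrow -c_{\sigma(k)}$ already give non-integers such as $\ln 3-\frac{3}{2}\ln 2$.

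What is missing is the real content of the theorem. You need a criterion, stated in terms of how the non-vanishing terms fluctuate inside clusters of nearly equal values, that decides between two regimes:
\begin{enumerate}
\item[(1)] infinite re-pairings can move the limit freely, in which case you need a Riemann-type argument that every real is attained;
\item[(2)] they cannot, in which case you must show the limit is fixed by finite data, and that the resulting set is genuinely a shifted \emph{group}.
\end{enumerate}
The group property is not automatic. Your argument that all of $x+\Gamma$ is attained would apply verbatim to zero triples, yet the main theorem of this paper asserts that the analogous classification fails for $3n$-convergence. So the step that produces disjoint representatives for a sum $s+t$ is exactly where the $2n$ structure must enter.

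Two smaller points:
\begin{enumerate}
\item[(1)] Rearranging $(a_n)$ does not permute the vectors $(a_{2k-1},a_{2k})$, so no Steinitz lemma applies.
\item[(2)] A $\Gamma$ defined through accumulation points need not be countable, since the accumulation set can be uncountable.
\end{enumerate}
The paper itself gives no proof of this statement; it quotes it from Dybskiy and Slutsky.
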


\subsection{Acknowledgments}
I am grateful for the Iowa State Mathematics Research Teams (ISMaRT) and professor Peter Burton for originally giving me the opportunity to research $3n$-convergence. I am also very appreciative of professor Konstantin Slutsky for being my honors capstone project advisor and providing support, sources, and feedback for my research and paper.

\vskip.1in
\section{3n-convergence}
\label{sec:3n-convergence}
\subsection{Introduction}
In an analogous fashion to $2n$-convergence, we define $3n$-convergence by convergence along the filter $(3k)_{k \in \N}$. We similarly define the $3n$-sum range for a sequence $A$ and denote it $\SR_3(A)$. At the end of their paper, Dybskiy and Slutsky posit that when considering $3n$-convergence, the $3n$-sum range is not a shifted additive subgroup and provide an example. Their example provided therein is in error as the sum range is $\Z$, and the main purpose of this work is to provide a valid example.
\par Before the statement of the main theorem, some terminology.
\begin{definition}[Complements and Zero Triples]
    An element $a$ in a sequence $A$ has a \emph{complementary pair} if there exist distinct $b,c \in A$ such that $a + b + c = 0$. Such a triple is called a \emph{zero triple}. If a sequence $A$ can be partitioned into zero triples, we say $A$ has a \emph{zero triple arrangement} (ZTA).
\end{definition}
\begin{definition}
    A subsequence $S \subseteq A$ is \emph{admissible} if $\sum S = s \in \SR_3(A)$.
\end{definition}

\subsection{Main Theorem}
\begin{theorem}
    Consider a sequence $A \subseteq \Z$ that satisfies:
\begin{enumerate}
    \item[(1)] Each element $a_k \in A$ appears at most once.
    \item[(2)] Each $a_k$ is in exactly two zero triples.
    \item[(3)] Elements have sufficiently fast growth determined by $|a_{3k+1}|,|a_{3k+2}| \geq \sum_{i=1}^{3k} |a_i|$.
\end{enumerate}
Then, $\SR_3(A)$ is not a countable additive subgroup of reals.
\end{theorem}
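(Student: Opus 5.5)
Throughout, I take as given what the theorem implicitly assumes: $A$ has at least one $3n$-convergent rearrangement, and the growth condition (3) means the elements have (eventually strictly) increasing absolute values.

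The plan is to describe $\SR_3(A)$ exactly, and then find two elements $s,t \in \SR_3(A)$ with $s+t \notin \SR_3(A)$ (or $-s \notin \SR_3(A)$). That shows $\SR_3(A)$ is not a subgroup; countability is not the issue.

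\textbf{Step 1: the tail of a convergent rearrangement is a zero-triple arrangement.} Let $\pi$ be a rearrangement with $\lim_k \sum_{n=1}^{3k} a_{\pi(n)} = s$. Since $A \subseteq \Z$, the partial sums along $3k$ are eventually constant. Hence every block $a_{\pi(3k+1)}+a_{\pi(3k+2)}+a_{\pi(3k+3)}$ vanishes for large $k$, so these blocks are zero triples of distinct elements.

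By condition (3), the largest element in absolute value of a zero triple equals minus the sum of the other two. This forces each zero triple to consist of two "large" elements $a_{3k+1},a_{3k+2}$ and one "older" element. Combined with condition (2), each element lies in exactly two zero triples, so the zero triples form a 2-regular incidence structure. I would make this concrete by building a graph whose vertices are elements and whose edges join elements sharing a zero triple. The growth condition should force this graph to have a rigid tree- or chain-like shape, which is what the paper's triangle pictures suggest.

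\textbf{Step 2: characterize the admissible subsequences.} A finite set $S \subseteq A$ is admissible exactly when $|S|$ is a multiple of $3$ and $A\setminus S$ has a ZTA. Its sum is then an element of $\SR_3(A)$, and conversely every element of $\SR_3(A)$ arises this way.

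Since each element lies in only two triples, choosing a ZTA of $A\setminus S$ is a propagation problem. Once one triple through an element is used, the other triple through it is excluded, and the choice propagates along the chain of triangles. I would show that a ZTA of the complement exists only for very specific $S$. For example, removing the initial segment $\{a_1,\dots,a_{3m}\}$ should leave a ZTA, giving sums $\sum_{i\le 3m} a_i$. Other admissible $S$ should come from "flipping" a finite stretch of the alternating triangle choice.

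I expect Step 2 to be the main obstacle. The proof needs a clean invariant to identify exactly which finite sets $S$ are admissible. I would first try a parity/orientation argument along the chain of triangles: each element is covered by exactly one of its two triangles, so a ZTA corresponds to a consistent orientation. A finite deviation is then possible only at a few "defect" points, and their number and positions determine $S$.

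\textbf{Step 3: exhibit the failure of closure.} With the description of $\SR_3(A)$ from Step 2, compute two admissible sums $s,t$ and show that $s+t$ (or $2s$, or $-s$) is not realizable. Because of the growth condition (3), any admissible sum is dominated by its largest element, so comparing magnitudes shows the candidate cannot be written as the sum of an admissible $S$.

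If $0 \notin \SR_3(A)$, the proof is immediate, since a subgroup must contain $0$. So I would first test whether $0$ is realizable, and otherwise use the triangle inequality with condition (3) on $2s$.
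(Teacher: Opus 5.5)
Your proposal stays at the level of a program. The shape of the zero triples (Step 1) and the description of admissible sets (Step 2) are stated as things that ``should'' hold, and Step 3 never names the sums to be tested.

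More importantly, the one concrete mechanism you offer for Step 3 fails. That mechanism is the claim that, by (3), an admissible sum is dominated by its largest element, so a magnitude comparison rules out $2s$ or $s+t$. But the two newest elements of a zero triple cancel down to minus an older element, so an admissible set can contain arbitrarily large elements and still have a small sum. In the paper's labelling (triples $\{a_k,a_{2k+3},a_{2k+4}\}$, root $\{a_0,a_1,a_2\}$), the set $\{a_3,a_7,a_8\}$ is admissible: shade the root triangle and alternate outward. Its sum is $a_3+a_7+a_8=a_3-a_2=a_0+a_1+a_3$, no matter how large $a_7,a_8$ are. Your own Step 1 picture (two large elements plus an older one) has exactly this cancellation built in. So no triangle-inequality argument can exclude a candidate until such pairs are eliminated.

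The fallback ``if $0\notin\SR_3(A)$'' does not help either. In this tree structure, shading the root triangle and alternating gives a ZTA of $A$, so $0\in\SR_3(A)$, as the paper remarks.

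The missing idea is the paper's flip operation combined with a descent on the maximal index. For adjacent vertices $a,b$ whose triangle is $\{a,b,c\}$, a flip replaces $\{a,b\}$ by the other pair $\{d,e\}$ of the second triangle through $c$. It preserves the sum because $a+b=-c=d+e$, and it preserves admissibility. The paper's argument runs as follows:
\begin{enumerate}
\item Take the admissible set $\{a_0,a_1,a_3\}$ with sum $s$.
\item Suppose $S'$ is admissible with sum $2s$ and the least possible maximal index.
\item Argue from the growth condition that this index is at least $5$, and that the top two elements of $S'$ form a pair $a_{n-1},a_n$ of one triangle.
\item Flip that pair inward. The result is admissible with the same sum and smaller maximal index, a contradiction.
\end{enumerate}
This is precisely the step that removes the cancelling large pairs that defeat your magnitude argument.

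Your Step 2 parity idea is close to the paper's later characterization (chains separated by odd numbers of triangular gaps). The main theorem does not need that full characterization, only the invariance of sum and admissibility under flips. As written, your Step 3 has no proof.
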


Remark: We will show this by translating the language of adding triples into a triangular tree graph. This graph is accompanied by a flip operation that sends a pair of adjacent vertices in a subset to a related pair of the same sum. Using this operation, we will exhibit an element in $\SR_3(A)$ that cannot be doubled. Condition (2) guarantees $0 \in \SR_3(A)$, thus proving the Theorem.
\vskip.1in
Although the main theorem provides a specific example where the $3n$-sum range is not a shifted additive subgroup, there are more general cases where we do get a shifted additive subgroup. However, to start to motivate the conditions of the main theorem, we first assume $A \subseteq \Z$ from here on.

\subsection{Cases Yielding a Subgroup}
\begin{definition}
    Let $A$ have the \emph{infinite complementary property} (ICP) if for every $a \in A$, there are infinitely many distinct $b,c \in A$ such that $a + b + c = 0$.
\end{definition}
Note 1: If $A$ has the ICP, then $A$ has a ZTA. Additionally, the ICP is a residual property, meaning for any finite $S \subseteq A$, $A \setminus S$ also has the ICP.
\begin{lemma}
    Let $A$ have the ICP and $a_1 + \dots + a_{3k} \in \SR_3(A)$. Then for every $N \in \N$ there exists $b_1 + \dots + b_{3\ell} = a_1 + \dots + a_{3k}$ such that the index of $b_i \in A$, $\mathrm{ind}(b_i)$, is at least $N$ for all $1 \leq i \leq 3\ell$.
\end{lemma}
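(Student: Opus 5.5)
We read the lemma as saying that an admissible value written as a sum of $3k$ distinct terms $a_1,\dots,a_{3k}$ of $A$ can be rewritten as a sum of $3\ell$ distinct terms all lying arbitrarily far out in $A$. The idea is to replace each $a_i$ with small index by the negative of a sum of two far-out terms, and then fix the count modulo $3$ using whole zero triples.

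First we isolate the terms to be replaced. Fix $N$. Let $F=\{i\le 3k : \mathrm{ind}(a_i)<N\}$; the terms $a_i$ with $i\notin F$ already have index at least $N$ and we keep them. For each $i\in F$, the ICP gives infinitely many pairs $b,c$ with $a_i+b+c=0$. Since only finitely many elements of $A$ have index below $N$, and only finitely many have already been used, we may choose such a pair with both indices at least $N$ and distinct from everything chosen so far. Doing this one $i\in F$ at a time, the chosen pairs are pairwise disjoint and disjoint from the kept terms. Note 1 (the ICP is residual) guarantees the ICP is still available after finitely many elements are removed.

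However, $a_i=-(b_i+c_i)$, so substituting the pair in place of $a_i$ does not preserve the value. We need a configuration that genuinely equals $a_i$ and uses only far-out terms. Take a pair $(b_i,c_i)$ as above. Next apply the ICP to $b_i$ and to $c_i$ to get far-out, fresh pairs with $b_i+d_i+e_i=0$ and $c_i+f_i+g_i=0$. Then
\[
a_i=-(b_i+c_i)=d_i+e_i+f_i+g_i.
\]
So $a_i$ is replaced by four far-out terms, and $b_i,c_i$ themselves are not used. All these choices can again be made inductively so that every term is distinct and has index at least $N$.

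After the substitution, the new sum equals $a_1+\dots+a_{3k}$. Its number of terms is $3k+3|F|$, since each of the $|F|$ replaced terms contributes $4$ instead of $1$. This is a multiple of $3$, so we may take $\ell=k+|F|$ and no padding is needed. If some other bookkeeping ever left the count off by a multiple, we could append a fresh far-out zero triple, which exists by Note 1 and adds $3$ terms of total $0$; the count here, however, already works out exactly.

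The main obstacle is the distinctness and freshness bookkeeping: each ICP application must avoid the finitely many previously used elements and everything of index below $N$. We handle this precisely because the ICP supplies infinitely many complementary pairs. Each such pair is an unordered pair of distinct elements, and infinitely many distinct pairs cannot all meet a fixed finite set $E$. Indeed, an element $x\in E$ determines its partner as $-a-x$ uniquely, so each $x\in E$ lies in at most one pair. Hence at most $|E|$ pairs meet $E$, and cofinitely many pairs avoid it.
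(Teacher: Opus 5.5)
Your proof is correct and is essentially the paper's argument: both write $a_i=-(b+c)$ for a complementary pair $\{b,c\}$ of $a_i$, then expand $b$ and $c$ through fresh far-out complementary pairs, so that $a_i=d+e+f+g$ with all four indices at least $N$; the paper does this for all $3k$ terms (getting $12k$ terms, with distinctness enforced by increasing indices), while you replace only the terms of index below $N$ (getting $3k+3|F|$ terms) and also take $b,c$ far out, which is unnecessary but harmless. Your explicit bound that at most $|E|$ complementary pairs of $a$ meet a finite set $E$ makes precise a step the paper only asserts; it relies on each integer occurring at most once in $A$ (automatic when $A\subseteq\Z$ is read as a set), which the paper's proof also assumes tacitly.
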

\begin{proof}
    Choose for $a_1$ any complementary pair $a_1'$ and $a_2'$. Because $a_1'$ has infinitely many complementary pairs, there must exist one of with both indices at least $N$, say $b_1$ and $b_2$. Using the ICP on $a_2'$, we can choose a complementary pair $b_3$ and $b_4$ such that both indices are strictly greater than $\max\{\textrm{ind}(b_1),\textrm{ind}(b_2)\}$. Thus, $a_1 = b_1 + b_2 + b_3 + b_4$. Proceeding in a similar manner, for each $a_i$ we can find $b_{4i-3}$, $b_{4i-2}$, $b_{4i-1}$, $b_{4i}$ whose sum is $a_i$ and in which every pair of indices is strictly greater than the maximum of all previous indices. In total, we have $4 \cdot (3k) = 3\ell$ terms such that $a_1 + \dots + a_{3k} = b_1 + \dots + b_{3\ell}$ and $\textrm{ind}(b_i) \geq N$ for all $1 \leq i \leq 3\ell$.
\end{proof}
Note 2: This Lemma also applies if $A$ is any general sequence with the ICP. However, the following Theorem requires the extra assumption $A \subseteq \Z$.
\begin{theorem}
    If $A \subseteq \Z$ has the ICP, then $\SR_3(A) = \{a_1 + \dots + a_{3k} : k \in \N, a_i \neq a_j \in A\}$. Furthermore, this forms an additive subgroup in $\Z$.
\end{theorem}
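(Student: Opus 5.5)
We prove the two inclusions, then the subgroup property. Write $F = \{a_1 + \dots + a_{3k} : k \in \N,\ a_i \text{ distinct terms of } A\}$.

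\emph{Step 1: $\SR_3(A) \subseteq F$.} Let $\pi$ be a permutation with $\lim_k \sum_{n=1}^{3k} a_{\pi(n)} = s$. The partial sums along the $3n$-filter are integers because $A \subseteq \Z$. A convergent sequence of integers is eventually constant, so $s = \sum_{n=1}^{3k} a_{\pi(n)}$ for some $k$. This is a sum of $3k$ distinct terms of $A$, hence $s \in F$. This is the only place where the hypothesis $A\subseteq\Z$ is used, which matches Note 2.

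\emph{Step 2: $F \subseteq \SR_3(A)$.} Fix $s = a_1 + \dots + a_{3k}$ with distinct terms. Let $R = A \setminus \{a_1,\dots,a_{3k}\}$. By Note 1, $R$ still has the ICP and therefore has a ZTA. List $a_1,\dots,a_{3k}$ first, then list the zero triples of a ZTA of $R$ one triple at a time. After index $3k$, every $3n$-partial sum equals exactly $s$, so the series $3n$-converges to $s$.

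The one subtlety is whether a ZTA of $R$ exists as a bijective partition covering every element. Note 1 asserts this, and we would justify it by a greedy enumeration. At each stage, take the least-indexed unused element. The ICP of the residual set gives it a complementary pair disjoint from the finitely many used elements. Proceeding this way exhausts $R$, so the arrangement is a genuine permutation. This shows $\SR_3(A) = F$. Note that $0 \in F$, taking any zero triple.

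\emph{Step 3: $F$ is a subgroup.} This is the main obstacle, because adding two elements of $F$ may reuse terms, and negating requires new terms. The tool is the preceding Lemma, which lets us re-express any element of $F$ using only terms of arbitrarily large index.

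For closure under addition, take $s, t \in F$ with $s = a_1+\dots+a_{3k}$. Let $N$ exceed every index used in $s$. Apply the Lemma to rewrite $t = b_1+\dots+b_{3\ell}$ with all $\mathrm{ind}(b_i) \ge N$. Then $s+t$ is a sum of $3(k+\ell)$ distinct terms, so $s + t \in F$.

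For inverses, write $s = a_1+\dots+a_{3k}$ and pick complementary pairs one term at a time. Choose $b_i, c_i$ with $a_i + b_i + c_i = 0$ such that $b_i, c_i$ avoid all the $a_j$ and all previously chosen pairs. This is possible by the ICP, since infinitely many pairs are available and only finitely many elements are forbidden. Then $-s = \sum_i (b_i + c_i)$ is a sum of $6k$ distinct terms, hence $-s \in F$.

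With $0 \in F$, this shows $F = \SR_3(A)$ is an additive subgroup of $\Z$.
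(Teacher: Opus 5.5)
Your proposal is correct and follows the paper's proof step for step: eventual constancy of integer-valued partial sums gives one inclusion, and prepending the $3k$ chosen terms to a ZTA of the residual set gives the other; the Lemma with $N$ beyond the indices of $s$ gives closure under addition, and termwise complementary pairs give negation. Your greedy construction of the ZTA makes explicit what the paper leaves to Note 1. Requiring the $b_i, c_i$ to avoid the $a_j$ in the negation step is harmless but unnecessary, since $-s$ is a standalone sum.
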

\begin{proof}
    Let $A \subseteq \Z$ have the ICP. For any series in the sum range, trailing triples must sum to zero since $\Z$ is uniformly separated. Therefore, we have $k$ initial triples of freedom for some $k \in \N$, giving the right-hand side. Conversely, given any sum $a_1 + \dots + a_{3k}$, we can take those as the first $k$ triples. Since the ICP is residual, $A \setminus \{a_1,\dots,a_{3k}\}$ has the ICP and therefore a ZTA. Appending this to the first $k$ triples gives a permutation of $A$ with sum $a_1+\dots +a_{3k}$.
    \vskip.1in
    To show this forms a subgroup, we first have $0 \in \SR_3(A)$ by Note 1. Additionally, if $x = a_1 + \dots + a_{3k}, y = b_1 + \dots + b_{3\ell} \in \SR_3(A)$, then by Lemma 2.3.2, we can find $c_1, \dots, c_{3m} \in A$ such that $b_1 + \dots + b_{3\ell} = c_1 + \dots + c_{3m}$ and $\{c_1,\dots,c_{3m}\} \cap \{a_1,\dots,a_{3k}\} = \varnothing$. Therefore, $a_1+\dots + a_{3k} + c_1 + \dots + c_{3m} = x+y \in \SR_3(A)$ and each element is distinct, proving closure under addition. To show $-x \in \SR_3(A)$, we know that there exist $b_1 \neq c_1 \in A$ such that $-a_1 = b_1 + c_1$. By the ICP, $a_2$ has $b_2 \neq c_2 \in A \setminus \{b_1,c_1\}$ such that $-a_2 = b_2 + c_2$. We continue on in a similar manner, producing $6k$ distinct elements with $\sum_{i=1}^{3k} (b_i + c_i) = -\sum_{i=1}^{3k}a_i = -x$. So, $-x \in \SR_3(A)$.
\end{proof}
From this powerful theorem we get various corollaries of note.
\begin{corollary}
    If $A = \Z$, then $\SR_3(A) = \Z$. Similarly, if $A$ is a subgroup of $\Z$ with generator $n$, then $\SR_3(A) = n\Z$.
\end{corollary}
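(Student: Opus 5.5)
The corollary follows from the preceding Theorem once we check that $A$ has the ICP and then identify the set of sums of $3k$ distinct elements. Throughout, $A$ is read as a sequence enumerating the set: for $A = \Z$ each integer is listed once, and for $A = n\Z$ each multiple of $n$ is listed once.

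\textbf{Step 1: $A = \Z$ has the ICP.} Fix $a \in \Z$. For each integer $b$ with $b \notin \{a,\, -2a,\, -a/2\}$, set $c = -a-b$. Then $a+b+c = 0$, and $a,b,c$ are pairwise distinct:
\begin{itemize}
\item $b \neq a$ by choice;
\item $c \neq a$ since $b \neq -2a$;
\item $c \neq b$ since $b \neq -a/2$.
\end{itemize}
This holds for all but finitely many $b$, so every $a$ has infinitely many complementary pairs. Hence the preceding Theorem gives
\[
\SR_3(\Z) = \{a_1+\dots+a_{3k} : k \in \N,\ a_i \text{ distinct}\}.
\]

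\textbf{Step 2: this set is all of $\Z$.} The sums are integers, so the set is contained in $\Z$. For the reverse inclusion, given $m \in \Z$, use the three distinct integers
\[
m,\quad N,\quad -N,
\]
where $N$ is chosen large enough that $N \notin \{m, -m, 0\}$. Their sum is $m$, so $m$ lies in the set with $k = 1$.

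\textbf{Step 3: $A = n\Z$ with $n \neq 0$.} Write $A = \{nz : z \in \Z\}$. A triple $nz_1, nz_2, nz_3$ is a zero triple exactly when $z_1,z_2,z_3$ is a zero triple in $\Z$, and distinctness is preserved because $n \neq 0$. So the ICP transfers from Step 1. Every sum of elements of $A$ is a multiple of $n$, so the set in the Theorem is contained in $n\Z$. Conversely, $nm$ is the sum of the distinct elements $nm,\ nN,\ -nN$, which gives equality with $n\Z$. (If $n = 0$, then $A = \{0\}$ is not a valid infinite sequence of distinct terms, so we assume $n \neq 0$.)

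The only step that needs any care is Step 1: verifying that enough complementary pairs avoid coincidences among the three terms. It amounts to excluding finitely many values of $b$.
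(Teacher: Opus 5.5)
Your proof is correct and follows the route the paper intends: the paper presents this corollary as an immediate consequence of the preceding theorem (if $A \subseteq \Z$ has the ICP, then $\SR_3(A)$ is the set of sums of $3k$ distinct elements), and you supply the two details it leaves implicit, namely that $\Z$ and $n\Z$ ($n \neq 0$) have the ICP, and that every $m$ (resp.\ $nm$) is realized as $m + N + (-N)$ (resp.\ $nm + nN + (-nN)$). Your exclusion of $b \in \{a, -2a, -a/2\}$ and your treatment of the degenerate case $n = 0$ are both fine.
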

\begin{corollary}
    Let $S \subseteq \Z$ such that $|S| < \infty$. If $A = \Z \setminus S$, then $\SR_3(A) = \Z$.
\end{corollary}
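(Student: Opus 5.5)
The statement to prove is the last Corollary: if $S\subseteq\Z$ is finite and $A=\Z\setminus S$, then $\SR_3(A)=\Z$. The plan is to reduce to the preceding Theorem. That means checking that $A$ has the ICP, and then showing that the set of sums of $3k$ distinct elements of $A$ is all of $\Z$.

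\textbf{Step 1: $A$ has the ICP.} Fix $a\in A$. For each integer $t$, consider the pair $b=t$, $c=-a-t$. Then $a+b+c=0$, and as $t$ ranges over $\Z$ these pairs are pairwise different. We need to exclude the $t$ for which:
\begin{itemize}
\item $t\in S$;
\item $-a-t\in S$;
\item $t=-a-t$, which would make $b=c$;
\item $t=a$ or $-a-t=a$, which would make $b$ or $c$ equal to $a$.
\end{itemize}
Only finitely many $t$ fail these conditions, since $S$ is finite. So infinitely many distinct complementary pairs remain, and $A$ has the ICP.

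\textbf{Step 2: apply the Theorem.} By the preceding Theorem, $\SR_3(A)$ is the additive subgroup of $\Z$ consisting of all sums $a_1+\dots+a_{3k}$ of distinct elements of $A$. It therefore suffices to exhibit $1$ in this set, since a subgroup containing $1$ is $\Z$.

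\textbf{Step 3: write $1$ as a sum of three distinct elements.} Choose $N$ larger than $\max\{|s|: s\in S\}+2$. Then $N$, $-N$ and $1$ would work if $1\notin S$, but $1$ might lie in $S$. To avoid this, use only large elements: take the triple $2N+1$, $-N$, $-N$ replaced by $2N+1$, $-N$, $-N-1+1$. Cleaner still is the triple
\[
(2N+1)+(-N)+(-N)=1,
\]
except that this repeats $-N$. So instead take
\[
(2N+2)+(-N)+(-N-1)=1.
\]
These three numbers are distinct, and each has absolute value at least $N$, so none of them lies in $S$. Thus $1\in\SR_3(A)$, and therefore $\SR_3(A)=\Z$.

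The only mild obstacle is making sure the chosen elements avoid $S$ and are pairwise distinct. Taking all entries of large absolute value handles both at once.
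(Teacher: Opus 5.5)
Your proof is correct and takes the route the paper intends: the corollary is stated without proof as a consequence of the preceding theorem on sets with the ICP, so the needed steps are exactly yours, namely checking that $\Z\setminus S$ has the ICP and exhibiting $1=(2N+2)+(-N)+(-N-1)$ as a sum of three distinct elements avoiding $S$. In the write-up, drop the false starts in Step 3 (the intermediate triple with $-N-1+1=-N$ repeats an entry) and keep only the final triple.
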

\begin{corollary}
    Let $n \geq 3$. If $S = n\Z$, and $A = \Z \setminus S$, then $\SR_3(A) = \Z$.
\end{corollary}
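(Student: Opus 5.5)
The plan is to reduce the corollary to Theorem 2.3.3. Once we know that $A=\Z\setminus n\Z$ has the ICP, that theorem gives that $\SR_3(A)$ is an additive subgroup of $\Z$ consisting of all sums $a_1+\dots+a_{3k}$ of distinct elements of $A$. It then suffices to exhibit $1$ as such a sum, because the only subgroup of $\Z$ containing $1$ is $\Z$ itself. The inclusion $\SR_3(A)\subseteq\Z$ holds trivially, since $A\subseteq\Z$ and Theorem 2.3.3 already gives the identification.

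\textbf{Verifying the ICP.} Fix $a\in A$, so $a\not\equiv 0 \pmod n$. We need infinitely many pairs of distinct $b,c\in A$ with $b+c=-a$.
\begin{enumerate}
\item Choose a residue class $r \pmod n$ with $r\not\equiv 0$ and $r\not\equiv -a$. This is possible because at most two of the $n\geq 3$ residue classes are excluded. This is the only place where $n\ge 3$ is used, and it is the one point needing care: for $n=2$ every odd $b$ forces $c=-a-b$ to be even, so the ICP fails.
\item For every $b\equiv r\pmod n$ put $c=-a-b$. Then $b\not\equiv 0$ and $c\equiv -a-r\not\equiv 0$, so $b,c\in A$.
\item Discard the finitely many $b$ in this class for which $b=c$, $b=a$, or $c=a$.
\end{enumerate}
Infinitely many admissible $b$ remain, giving infinitely many distinct complementary pairs for $a$. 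Hence $A$ has the ICP.

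\textbf{Exhibiting $1$.} Take the three elements $1$, $n+1$, $-(n+1)$. None is divisible by $n$, since $n\ge 3$ gives $n\nmid 1$ and $n+1\equiv 1 \pmod n$. They are pairwise distinct, since $n+1\neq 1$, $-(n+1)\neq 1$, and $n+1\neq-(n+1)$. Their sum is $1$.

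By Theorem 2.3.3, $1\in\SR_3(A)$ and $\SR_3(A)$ is a subgroup of $\Z$, so $\SR_3(A)=\Z$. I expect no step to be a real obstacle; the residue-class choice in step 1 is the only point requiring the hypothesis $n\ge 3$.
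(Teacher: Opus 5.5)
Your proposal is correct and follows the route the paper intends: the corollary is stated without proof as an immediate consequence of Theorem 2.3.3. Your residue-class verification of the ICP (choosing $r \not\equiv 0, -a \pmod n$, which is where $n \ge 3$ enters), together with the witness $1 + (n+1) + (-(n+1)) = 1$, supplies exactly the details the paper leaves implicit.
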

\subsection{Finite Complements and the Main Theorem}
We now tack on one extra assumption found in the main theorem, namely that each $a_k \in A$ appears exactly once. The preceding section demonstrated the usefulness of the ICP in obtaining a subgroup of integers. To motivate an additional main theorem assumption, first suppose we have a property opposite in a sense to the ICP.
\begin{prop}
    If for every $a \in A \subseteq \Z$, there is exactly one complementary pair, then $\SR_3(A) = 0$.
\end{prop}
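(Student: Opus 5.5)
The plan is to show that the hypothesis forces a rigid partition of $A$ into zero triples, and that this partition makes every $3n$-convergent rearrangement have sum $0$. As in the main theorem, I read the hypothesis together with the standing assumption that each element of $A$ appears exactly once. The proof has three steps: the partition, the eventual structure of any convergent rearrangement, and a bookkeeping argument for the initial segment.

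\textbf{Step 1: the complementary pairs form a partition.} Take $a\in A$ with its unique complementary pair $\{b,c\}$, so $a+b+c=0$ with $a,b,c$ distinct. The same equation shows that $\{a,c\}$ is a complementary pair for $b$. By uniqueness for $b$, this is \emph{the} complementary pair of $b$; likewise $\{a,b\}$ is the complementary pair of $c$. Hence the relation ``$x$ and $y$ lie in a common zero triple'' is an equivalence relation whose classes are exactly the triples $\{a,b,c\}$. So $A$ is partitioned into pairwise disjoint zero triples $T_1,T_2,\dots$. Moreover, every zero triple of $A$ is one of the $T_j$. In particular $A$ has a ZTA. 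Listing the $T_j$ consecutively gives a permutation whose partial sums at every index $3k$ vanish, so $0\in\SR_3(A)$ and the sum range is nonempty.

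\textbf{Step 2: the tail of a convergent rearrangement.} Let $\pi$ be a permutation with $s_{3k}=\sum_{n=1}^{3k}a_{\pi(n)}\to x$. Since every $s_{3k}$ is an integer, the sequence $(s_{3k})$ is eventually constant. Thus there is $K$ with $s_{3k}=x$ for all $k\ge K$, and $x\in\Z$. For $k\ge K$ this gives
\[a_{\pi(3k+1)}+a_{\pi(3k+2)}+a_{\pi(3k+3)}=s_{3k+3}-s_{3k}=0.\]
These are three distinct elements of $A$, so they form a zero triple. By Step 1, each such block is one of the $T_j$. Hence the tail $\{a_{\pi(n)}:n>3K\}$ is a union of whole triples $T_j$.

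\textbf{Step 3: the initial segment.} Since the $T_j$ partition $A$ and the tail is a union of whole $T_j$'s, its complement $\{a_{\pi(1)},\dots,a_{\pi(3K)}\}$ is also a union of whole $T_j$'s. Each of these sums to $0$, so $x=s_{3K}=0$. Therefore $\SR_3(A)=\{0\}$.

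The only step needing care is Step 1. There one must observe that the uniqueness hypothesis is symmetric within a triple: uniqueness for $a$ alone does not immediately exclude $b$ having a different pair, so uniqueness must be applied to $b$ and $c$ as well. Once the partition is established, Steps 2 and 3 follow from the uniform separation of $\Z$, exactly as in the proof of Theorem 2.3.3.
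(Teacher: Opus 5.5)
Your proof is correct and follows the same route as the paper. Uniqueness of complementary pairs partitions $A$ into disjoint zero triples; the tail of any $3n$-convergent rearrangement must consist of zero triples, hence of whole partition triples; so the finite initial segment is a union of whole triples and sums to $0$. You also make explicit three points the paper leaves terse: integer partial sums are eventually constant, each tail block must be one of the $T_j$, and $0\in\SR_3(A)$.
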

\begin{proof}
    Any zero triple in $A$ is the unique complementary pair for all three elements in the triple, so $A$ is partitioned into disjoint zero triples. If $S \subseteq A$ is admissible, then $A \setminus S$ is composed of infinitely many zero triples. Therefore, $S$ is finite and composed of elements from zero triples in some permutation, so $\sum S = 0$.
\end{proof}
This case is useful but not very interesting, so let us go one step further by incrementing the number of complementary pairs.
\begin{definition}
    If every element $a \in A$ has exactly $m$ complementary pairs, we say that $A$ is an $m$-complement set.
\end{definition}
We claim that such a set does exist with an important initial degree of flexibility. However, once that choice is made, the structure of $A$ is restricted.
\begin{theorem}
    We can create infinitely many $m$-complementary sets of integers. Additionally, no two sets of zero triples share more than one element.
\end{theorem}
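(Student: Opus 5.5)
The plan is an explicit inductive construction, building $A$ one element at a time as a sequence of distinct integers. At each stage we keep a finite set $A_n$ together with a finite list of zero triples. Every element of $A_n$ lies in at most $m$ of these triples, and any two triples share at most one element. Call an element \emph{deficient} if it currently lies in fewer than $m$ triples. The construction is organized so that each element stops being deficient after finitely many stages. The limiting set $A=\bigcup_n A_n$ then has every element in exactly $m$ triples. To ensure no unwanted zero triples ever appear, every new element is chosen with absolute value much larger than the sum of absolute values of everything chosen so far, in the spirit of condition (3) of the Main Theorem.

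The basic step handles the deficient element $a$ of smallest index. We pick a fresh integer $b$ and set $c=-a-b$, adjoining the triple $\{a,b,c\}$. We choose $|b|>3\sum_{x\in A_n}|x|$. Then $|b|$ and $|c|$ both exceed three times the total mass of $A_n$. The new triple meets every old triple only in $a$, so the ``share at most one element'' condition is preserved.

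The key point is to check that no unintended zero triples arise. Consider any triple $x+y+z=0$ drawn from $A_{n}\cup\{b,c\}$ other than $\{a,b,c\}$. It cannot use exactly one of $b,c$: the other two terms are old, so their sum is too small to cancel $b$ or $c$. If it uses both $b$ and $c$, then $x+b+c=0$ forces $x=a$. If it uses neither, it was already a triple before this step. Hence the set of zero triples of $A$ is exactly the set of triples we adjoin, and each element's count of complementary pairs equals the number of adjoined triples containing it. The newly added $b$ and $c$ enter with count $1$. By dovetailing, always serving the lowest-index deficient element, each element is served at most $m$ times, after finitely many steps. This yields an $m$-complement set. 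The case $m=1$ is consistent with the earlier proposition.

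The infinitely many sets come from the initial degree of freedom. We start from an arbitrary seed triple $\{a_1,a_2,-a_1-a_2\}$, and at each step we have infinitely many admissible choices of $b$. Different seeds, for instance scaling by distinct primes, give pairwise distinct sets. Two distinct zero triples share at most one element because sharing two elements determines the third. Still, our inductive invariant makes this explicit, and it is the structural restriction the statement refers to.

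The main obstacle I expect is the growth argument. We must carefully rule out a new zero triple that mixes exactly one new element with two old ones, and also rule out coincidences such as $c$ equalling an existing element or $b=c$. The bound $|b|>3\sum|x|$ handles all of these. To guarantee distinctness, we also need $b\neq -a/2$, which the bound gives automatically.
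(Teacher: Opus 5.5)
Your proposal is correct and follows essentially the paper's route. Both start from an arbitrary seed triple, give each element fresh complementary pairs whose absolute values dominate everything chosen so far, and conclude that the only zero triples are the adjoined ones; the paper does this for $m=2$ with the bound $>2M_{k-1}$ and says higher $m$ is analogous, while your lowest-index-deficient dovetailing makes general $m$ explicit. Two small corrections: $|b|>3\sum_{x\in A_n}|x|$ only gives $|c|>2\sum_{x\in A_n}|x|$, not three times, though this still covers every case of your check; and for $m=1$ no element is ever deficient, so your procedure stops at the seed triple, and to get an infinite set you would instead keep adjoining fresh disjoint triples under the same growth condition.
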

\begin{proof}
    We show that we can make infinitely many $2$-complement sets of integers, as higher $m$ will follow a similar process. Take some arbitrary zero triple of distinct integers $a_0 + a_1 + a_2 = 0$. To give $a_0$ a second complementary pair, we can choose $a_3 \neq a_4 \in \Z \setminus \{a_0,a_1,a_2\}$ such that $a_0 + a_3 + a_4 = 0$. Let $A_0 = \{a_0,\dots,a_4\}$. By the Pigeonhole Principle, no other triple in $A_0$ can sum to zero. Since $A_0$ is finite, the set $\{|x| : x \in A_0\}$ is bounded above by some $M_0 \in \R$. To give $a_1$ a second complement, we can choose $a_5 \neq a_6 \in \Z \setminus A_0$ such that $a_1 + a_5 + a_6 = 0$ and $|a_5|,|a_6| > 2M_0$. Since $a_5$ and $a_6$ are large, they cannot be repeats. Additionally, neither of them can complement any other existing element in $A_1 = A_0 \cup \{a_5,a_6\}$, thus ensuring we have exactly two complements for $a_0$ and $a_1$. We can continue on in this way, finding for each $a_k$ a complementary pair:
    
    \[a_{2k+3},a_{2k+4} \in \Z \setminus A_{k-1} \textnormal{ such that } |a_{2k+3}|,|a_{2k+4}| > 2M_{k-1},\]
    
    where $A_{k-1} = A_{k-2} \cup \{a_{2k+1},a_{2k+2}\}$ and $M_{k-1} \geq \max\{|x| : x \in A_{k-1}\}$. Since $a_0,a_1,a_2$ were arbitrary, we can construct a branching 2-complement set for any given zero triple.
\end{proof}
Since no two zero triples share more than the given complemented element, we get a branching tree-like structure. We can reformulate this using a triangular graph as shown in Figure 1 above. One triangle corresponds to a zero triple, and two triangles share a vertex if they share the corresponding element in a zero triple. If a zero triple is used in the series expansion of $A$, then we can shade the triangle, and leave it unshaded otherwise. Index labeling starts from the center and proceeds as in the proof of Theorem 2.4.3.
\begin{figure}[H]
\begin{center}
\begin{tikzpicture}[scale=0.5, label style/.style={font=\scriptsize}]
    \tkzDefPoint(0,0){0}
    \tkzDefShiftPoint[0](-60:3){1}
    \tkzDefShiftPoint[0](-120:3){2}
    \tkzDrawPolygon(0,1,2)
    \tkzDrawPoints(0,1,2)

    \OrigTop{0}{3}{nofill}{mid}
    \OrigRight{1}{3}{nofill}{mid}
    \OrigLeft{2}{3}{nofill}{mid}
    
    \FlipLeft{3}{2}{nofill}
    \FlipRight{4}{2}{nofill}
    \FlipRight{5}{2}{nofill}
    \FlipBottom{6}{2}{nofill}
    \FlipLeft{7}{2}{nofill}
    \FlipBottom{8}{2}{nofill}

    \OrigLeft{9}{1}{nofill}{end}
    \OrigTop{10}{1}{nofill}{end}
    \OrigRight{11}{1}{nofill}{end}
    \OrigTop{12}{1}{nofill}{end}
    \OrigRight{13}{1}{nofill}{end}
    \OrigTop{14}{1}{nofill}{end}
    \OrigRightNoDots{15}{1}{nofill}{end}
    \OrigLeft{16}{1}{nofill}{end}
    \OrigLeft{17}{1}{nofill}{end}
    \OrigTop{18}{1}{nofill}{end}
    \OrigRight{19}{1}{nofill}{end}
    \OrigLeft{20}{1}{nofill}{end}

    \tkzLabelPoint[right](0){$a_0$}
    \tkzLabelPoint[above right](1){$a_1$}
    \tkzLabelPoint[above left](2){$a_2$}
    \tkzLabelPoint[below left](3){$a_3$}
    \tkzLabelPoint[below right](4){$a_4$}
    \tkzLabelPoint[below right](5){$a_5$}
    \tkzLabelPoint[right](6){$a_6$}
    \tkzLabelPoint[above right](7){$a_7$}
    \tkzLabelPoint[right](8){$a_8$}
\end{tikzpicture}
\caption{The graph of a 2-complement set $A = \{a_0,a_1,\dots\}$ obtained by the proof in Theorem 2.4.3.}
\end{center}
\end{figure}
We can now redefine an admissible set $S \subseteq A$ in terms of this triangular graph. A finite set $S$ is admissible if the remaining elements $A \setminus S$ can be shaded by triangles such that each point in $A \setminus S$ is in exactly one shaded triangle. The conclusion then is that admissible sets in $A$ have a correspondence with elements in $\SR_3(A)$ given by the sum of the elements in that admissible set. Reframing admissibility this way, we can define an operation on these sets of points, which we will call the flip operation.
\begin{definition}
    For two adjacent points $a$ and $b$ with complement $c$, we can define a flip in two directions (when applicable):
    \begin{enumerate}
        \item[(1)] If the index of $c$ is greater than those of $a$ and $b$, then send $a$ and $b$ to the other unique pair of points $d$ and $e$ such that $c + d + e = 0$. This is called a flip outwards as the indices of $d$ and $e$ are greater than those of $a$ and $b$.
        \item[(2)] If $a$ and $b$ are not on the starting triangle and the index of $c$ is less than those of $a$ and $b$, then send $a$ and $b$ to the other unique pair of points $d$ and $e$ such that $c + d + e = 0$. This is called a flip inwards as the indices of $d$ and $e$ are less than those of $a$ and $b$.
    \end{enumerate}
\end{definition}
For notational convenience, we may write $F(a_k)$ and $F(a_\ell)$ as the two points $a_k$ and $a_\ell$ flip to, although this is not necessarily unique. It is clear to see that flipping preserves the sum of the set. However, flipping also preserves admissibility of the set.
\begin{prop}
    If $S$ is admissible, then the set $S'$ created by flipping any pair of points inwards or outwards is also admissible.
\end{prop}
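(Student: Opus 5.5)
The plan is to build an explicit shading of $A\setminus S'$ by modifying a shading of $A\setminus S$ only locally. Fix an admissible $S$ and a shading $\mathcal{T}$ of $A\setminus S$: a set of triangles of the tree graph such that each point of $A\setminus S$ lies in exactly one triangle of $\mathcal{T}$, and no point of $S$ lies in any triangle of $\mathcal{T}$. Consider a flip of the adjacent pair $a,b\in S$ with common complement $c$, sending $\{a,b\}$ to $\{d,e\}$, where $c+d+e=0$. Then $S'=(S\setminus\{a,b\})\cup\{d,e\}$. I implicitly assume $d,e\notin S$, since otherwise the flip does not produce a set of the same size; this should be stated as part of what ``applicable'' means. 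The goal is a shading $\mathcal{T}'$ of $A\setminus S'$.

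\textbf{Locating $c$.} Since $a,b\in S$, the triangle $\{a,b,c\}$ is not in $\mathcal{T}$. By condition (2) and Theorem 2.4.3, $c$ lies in exactly two triangles, $\{a,b,c\}$ and $\{c,d,e\}$. Hence $c\in A\setminus S$ can only be covered by $\{c,d,e\}$. If $c\in S$ instead, the argument needs separate treatment; the paper's flips implicitly have $c\notin S$, and I would restrict to that case, noting it in the proof. With $c\notin S$, the triangle $\{c,d,e\}\in\mathcal{T}$, and so $d,e\notin S$ automatically.

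\textbf{The new shading.} Set $\mathcal{T}'=(\mathcal{T}\setminus\{\{c,d,e\}\})\cup\{\{a,b,c\}\}$. I check the three kinds of points in turn.
\begin{enumerate}
\item[(i)] The point $c$ is still covered exactly once, now by $\{a,b,c\}$.
\item[(ii)] The points $a,b$ now lie in $A\setminus S'$. They are covered by $\{a,b,c\}$, and by no other triangle of $\mathcal{T}'$, because $a,b\in S$ meant no triangle of $\mathcal{T}$ contained them.
\item[(iii)] The points $d,e$ are now in $S'$. Their only covering triangle in $\mathcal{T}$ was $\{c,d,e\}$, which has been removed. Each of $d,e$ lies in exactly one other triangle, and that triangle cannot belong to $\mathcal{T}$, since $d$ and $e$ were each already covered once by $\{c,d,e\}$.
\end{enumerate}
All other points keep the coverage they had. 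Thus $\mathcal{T}'$ shades $A\setminus S'$ exactly once, so $S'$ is admissible.

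\textbf{Direction of the flip.} The inward/outward distinction plays no role: the local swap is symmetric in $\{a,b\}$ and $\{d,e\}$, so both cases follow from the same argument. Finiteness of $S'$ is preserved, and $\sum S'=\sum S$ because $a+b=-c=d+e$.

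\textbf{Main obstacle.} The delicate point is justifying that $c\notin S$, or equivalently that $\{c,d,e\}$ is shaded. If $c\in S$, then $d$ and $e$ may be covered by their other triangles, and the swap fails. I would handle this by reading the flip definition as requiring $a,b\in S$ and $c\notin S$, so that the flip is only applied when it makes sense.
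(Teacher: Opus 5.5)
Under the hypothesis $c\notin S$, which the paper's proof also uses, your proof is correct. Its core move is the paper's: un-shade $\triangle cde$ and shade $\triangle abc$. The difference lies in how clashes are ruled out.

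The paper says the shading ``from $\triangle cde$ onward'' becomes opposite to the original. It then appeals to a parity argument on triangular gaps that only appears later, in the proof of Theorem 2.5.2. You check directly that $\mathcal{T}'=(\mathcal{T}\setminus\{\{c,d,e\}\})\cup\{\{a,b,c\}\}$ covers $A\setminus S'$ exactly once. This shows that nothing beyond the two triangles through $c$ changes: the other triangles through $d$ and $e$ were already unshaded, and those through $a$ and $b$ stay unshaded.

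Your version buys several things:
\begin{itemize}
\item it is self-contained and avoids the forward reference;
\item it handles inward and outward flips at once;
\item it uses only the fact that every element lies in exactly two zero triples, with no tree structure, indices, or parity bookkeeping.
\end{itemize}

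Both arguments rest on $c\notin S$; the paper uses it silently when it says $\triangle cde$ is shaded ``as $c\notin S$''. You flag this, but you describe the case $c\in S$ as needing ``separate treatment''. That case cannot be rescued, because the proposition is false there.

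Use the labeling of Theorem 2.4.3, in which the second triple through $a_k$ is $\{a_k,a_{2k+3},a_{2k+4}\}$. Take $S=\{a_0,a_1,a_2,a_3,a_9,a_{10}\}$.
\begin{itemize}
\item $S$ is admissible: shade $\{a_k,a_{2k+3},a_{2k+4}\}$ for $k=4,\dots,8$ and for $k=21,\dots,24$, then continue alternately outward in every branch.
\item Flipping $\{a_0,a_1\}$ across $c=a_2$ is an outward flip, since the index $2$ exceeds $0$ and $1$. It gives $S'=\{a_2,a_3,a_7,a_8,a_9,a_{10}\}$, with the same sum.
\item Now $a_0\notin S'$, yet both zero triples containing $a_0$, namely $\{a_0,a_1,a_2\}$ and $\{a_0,a_3,a_4\}$, meet $S'$. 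So $A\setminus S'$ has no ZTA.
\end{itemize}

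The condition $c\notin S$ therefore has to be an explicit hypothesis of the proposition, or part of when a flip counts as applicable, as you suggest. With that hypothesis, your proof is complete.
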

\begin{proof}
    Let $S$ be admissible, and suppose we want to flip points $a,b \in S$ outwards to positions $d$ and $e$, with $c$ being the complement of $a,b$ and $d,e$. In this position, $\triangle abc$ is unshaded, as well as the triangles of lower index adjacent to $a$ and $b$, while $\triangle cde$ is shaded as $c \notin S$. After flipping $a$ and $b$ to $d$ and $e$, then $\triangle abc$ must be shaded, while $\triangle cde$ becomes unshaded. Note that the triangles of lower index adjacent to $a$ and $b$ are still unshaded, so the shading of the inner triangles is unaffected by the operation, while the shading of $\triangle cde$ onward is opposite to the original configuration. However, by a parity argument on triangular gaps we will demonstrate in the proof of Theorem 2.5.2, this will not clash with any potentially affected points. So the resultant set after flipping these points is admissible. Admissibility after an inward flip is shown similarly.
\end{proof}
We are now ready to prove the main theorem.
\begin{proof}
    Let $A \subseteq \Z$ satisfy the conditions of the main theorem. That is, each $a_k \in A$ appears at most once, each $a_k$ is in exactly two zero triples, and $|a_{3k+1}|,|a_{3k+2}| \geq \sum_{i=1}^{3k}|a_i|$. Let $S = \{a_0,a_1,a_3\}$ and $s = a_0 + a_1 + a_3$, which is admissible. Assume there is a set $S'$ with sum $2s$ and least maximum index. Due to the rapid growth in assumption 3, this maximum index must be at least 5. For such an arrangement to be possible, there must now be a pair of maximal elements $a_{n-1}$ and $a_n$, which we can flip inwards to a pair with equivalent sum as shown in Figure 2. Since flipping a pair preserves admissibility, this resultant set is still admissible. However, the result of the flip itself must not lie in $S'$ because each element appears exactly once. We have now reduced the set $S'$ to a set with the same sum but smaller maximum index, contradicting the choice of $S'$.
    \end{proof}
\begin{figure}[H]
\begin{center}
\begin{tikzpicture}[scale=0.5, label style/.style={font=\normalsize}]
    \tkzDefPoint(0,0){0}
    \tkzDefShiftPoint[0](-60:3){1}
    \tkzDefShiftPoint[0](-120:3){2}
    \tkzDrawPolygon(0,1,2)
    \tkzDrawPoints(0,1,2)

    \OrigTop{0}{3}{nofill}{mid}
    \OrigRight{1}{3}{nofill}{mid}
    \OrigLeft{2}{3}{fill}{mid}
    
    \FlipLeft{3}{2}{nofill}
    \FlipRight{4}{2}{fill}
    \FlipRight{5}{2}{fill}
    \FlipBottom{6}{2}{fill}
    \FlipLeft{7}{2}{nofill}
    \FlipBottom{8}{2}{nofill}

    \OrigLeft{9}{1}{fill}{end}
    \OrigTop{10}{1}{fill}{end}
    \OrigRight{11}{1}{nofill}{end}
    \OrigTop{12}{1}{nofill}{end}
    \OrigRight{13}{1}{nofill}{end}
    \OrigTop{14}{1}{nofill}{end}
    \OrigRightNoDots{15}{1}{nofill}{end}
    \OrigLeft{16}{1}{nofill}{end}
    \OrigLeft{17}{1}{fill}{end}
    \OrigTop{18}{1}{fill}{end}
    \OrigRight{19}{1}{fill}{end}
    \OrigLeft{20}{1}{fill}{end}

    \begin{scope}
        \tkzSetUpPoint[size = 3, color = black, fill = black]
        \tkzDrawPoints(0,1,3)
    \end{scope}
    \tkzLabelPoint[right](0){$a_0$}
    \tkzLabelPoint[above right](1){$a_1$}
    \tkzLabelPoint[below left](3){$a_3$}
    \begin{scope}
        \tkzSetUpPoint[size = 3, color = red, fill = red]
        \tkzDrawPoints(35,36)
    \end{scope}
    \tkzLabelPoint[left](35){$a_n$}
    \tkzLabelPoint[left](36){$a_{n-1}$}
    \begin{scope}
        \tkzSetUpPoint[size = 3, color = blue, fill = blue]
        \tkzDrawPoints(6,15)
    \end{scope}
    \tkzLabelPoint[right](6){$F(a_{n-1})$}
    \tkzLabelPoint[above right](15){$F(a_n$)}
\end{tikzpicture}
\caption{Demonstrating flipping the maximal elements of $S'$ inwards given the initial set $S$}
\end{center}
\end{figure}
\newpage
\subsection{Additional Structural Results in 2-complement Sets}
We have just shown an example of a $3n$-sum range that is not a subgroup by examining a sum range in 2-complement set that fails closure under addition. That being said, we can still find additive inverses, which we first show by geometrically characterizing the admissible subsets.
\vskip.1in
\begin{definition}[Chains and Triangular Gaps]
    A \emph{chain} is a set of vertices that form a path of any nonnegative length. Given two vertices in $a_i,a_j \in S$ with $i < j$, a \emph{triangular gap}, or gap for short, occurs between them if the triangle adjacent to $a_i$ branching to $a_j$ does not contain any points in $S$, while the succeeding triangle branching to $a_j$ contains $a_j$. Gaps of $n$ triangles occur between vertices $a_i$ and $a_j$, $i < j$, if the triangle adjacent to $a_i$ branching to $a_j$ does not contain any points in $S$, and the (unique) vertex that is both in this triangle and the one containing $a_i$ has a gap of $n-1$ triangles to $a_j$.
\end{definition}
\begin{theorem}[Characterization of Admissible Subsets of a 2-complement Set]
    A subset $S \subseteq A$ is admissible if and only if it is composed of chains of points separated by gaps of an odd number of triangles.
\end{theorem}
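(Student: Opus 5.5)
Since $A$ is a 2-complement set, every vertex lies in exactly two triangles, so the triangular graph has no cycles. The plan is to view it as a rooted tree of triangles, with root the starting triangle $\{a_0,a_1,a_2\}$, and to treat admissibility as a local shading problem. By the reformulation above, a finite $S$ is admissible exactly when the triangles can be shaded so that every vertex of $A\setminus S$ lies in exactly one shaded triangle and every vertex of $S$ lies in none. First I record what this forces for a single triangle $T$:
\begin{itemize}
\item if $T$ contains a point of $S$, it must be unshaded;
\item if $T$ is unshaded and contains a vertex $v \notin S$, then the other triangle through $v$ must be shaded;
\item if $T$ is shaded, both other triangles through its vertices are unshaded.
\end{itemize}
So, away from $S$, shading alternates along any path of triangles in which consecutive triangles share a vertex not in $S$. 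Finiteness of $S$ then fixes the far region: every vertex outside the finite subtree spanned by $S$ is in $A\setminus S$, and each such region must carry a ZTA. This is the local/global dichotomy I will exploit.

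For the forward direction, let $S$ be admissible with a valid shading. Take two points $a_i,a_j \in S$ with $i<j$ that are not in a common chain. Consider the path of triangles $T_1,\dots,T_n$ from $a_i$ toward $a_j$, where $T_1$ is the triangle at $a_i$ branching to $a_j$ and $T_n$ contains $a_j$. The intermediate shared vertices are the pivot vertices in the recursive definition of a gap, and none of them is in $S$. Because $T_1$ contains $a_i \in S$, it is unshaded. Each pivot vertex is not in $S$, so it lies in exactly one shaded triangle, which forces shading to alternate: $T_2$ shaded, $T_3$ unshaded, and so on. Since $T_n$ contains $a_j \in S$, it must be unshaded. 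Unshaded triangles sit at odd positions, so $n$ is odd, and the gap, which the definition counts in triangles, has an odd number of triangles. I will carefully check the off-by-one relation between $n$ and the recursive gap count, since the definition counts the first empty triangle as gap $1$. Points of $S$ sharing a triangle with no empty triangle between them form the chains; consecutive chain points share an unshaded triangle, which is consistent.

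For the converse, given $S$ made of chains separated by odd gaps, I build the shading explicitly:
\begin{enumerate}
\item Leave unshaded every triangle meeting $S$.
\item Along each gap path between consecutive chains, shade alternately starting with shaded at $T_2$; oddness ensures $T_{n-1}$ is shaded and $T_n$ is unshaded, so the two ends match.
\item On every remaining branch hanging off the finite configuration, starting at a vertex $v\notin S$ whose one triangle is already determined, shade the other triangle iff the determined one is unshaded. Then propagate outward by alternation; this is possible because the tree has no cycles, so no conflict can arise.
\end{enumerate}
I must check that each vertex not in $S$ ends up in exactly one shaded triangle. This holds for vertices lying in a chain triangle, since both that triangle and the outward one are handled, and it holds for pivot vertices by the alternation. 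The same propagation argument supplies the "parity argument" promised in the proof of Proposition 2.4.6.

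The main obstacle is making "chains separated by gaps" precise for configurations where several chains branch from a common region, not just along a single path. I would handle this by inducting on the number of chains. Remove a chain of maximal index; this is a leaf in the tree of chains. The leaf chain connects to the rest through a single gap path, so I can apply the two-point parity argument to that path alone and then glue shadings along the pivot vertex using the acyclicity of the triangle tree.
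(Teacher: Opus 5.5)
Your proposal is correct and follows the paper's route. Every vertex outside $S$ lies in exactly one shaded triangle, so shading alternates along a gap path that begins and ends at unshaded triangles meeting $S$, which forces an odd gap. Conversely, shading alternately along odd gaps and propagating outward gives a ZTA of $A\setminus S$; you also spell out the outward propagation and the gluing at branch points, which the paper leaves implicit.

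One small correction concerns acyclicity. It does not follow from each vertex lying in exactly two triangles: triples such as $\{1,10,-11\},\{10,100,-110\},\{100,1,-101\}$ form a cycle and can sit inside a 2-complement set. Take acyclicity instead from the construction in Theorem 2.4.3, which is the tree setting the definition of a gap already presupposes.
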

\begin{proof}
We can partition any admissible set $S$ into chains because individual vertices are chains by definition. If $S$ had two chains not separated by a triangular gap, then there are two adjacent triangles, both containing either one or two vertices in $S$. The vertex in both triangles must not be in $S$, otherwise we have only one chain. But then at least one of these triangles must be shaded, using an unavailable element in $S$. Since shaded and unshaded triangles must alternate, the same holds for any gap of even parity. Conversely, any set of chains with gaps of one triangle is admissible by shading the triangular gaps, producing a ZTA of $A \setminus S$. Since shading alternates parity, the same holds for any gaps of odd parity.
\end{proof}
Indeed, the set $S = \{a_0,a_1,a_3\}$ in the proof of the main theorem is itself a chain of three vertices. But we can have other admissible sets such as $\{a_3,a_4,a_6,a_7\}$ using the vertex labeling in Figure 1. We could not, however, choose only $\{a_1,a_3\}$, as there is a gap of zero triangles between the two. This language of triangular gaps is used to justify the result that $a_0$ in this example cannot be reached by $S$ or a ZTA of $A \setminus S$. Using this helpful fact, we will now show that even though the $3n$-sum range of series in the main theorem may not be closed under addition, they are always closed under negation.
\begin{prop}
    If $A \subseteq \Z$ is a 2-complement set such that each $a_k$ appears exactly once, then $\SR_3(A)$ is symmetric. That is, if $s \in \SR_3(A)$, then $-s \in \SR_3(A)$ as well.
\end{prop}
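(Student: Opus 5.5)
Given $s \in \SR_3(A)$, the plan is to produce an admissible set with sum $-s$ directly from an admissible set $S$ with $\sum S = s$. The guiding identity is that every vertex $a$ lies in exactly two zero triples, $\{a,b,c\}$ and $\{a,d,e\}$, so that $-a = b + c = d + e$. In other words, negating a single element is the same as replacing it by a complementary pair. The candidate for $-s$ is built from the zero-triple arrangement of $A \setminus S$. By the remark following the Characterization Theorem, $S$ together with the shaded triangles covering $A \setminus S$ covers every vertex exactly once. Hence the elements of $S$ are exactly the vertices not used by shaded triangles. I would therefore pass to the \emph{complementary shading}: among the triangles that touch $S$, take the ones that the current ZTA leaves unshaded, and consider the vertices of these triangles that are not in $S$.

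The concrete plan has three steps. First, write $S$ as a finite union of chains $C_1,\dots,C_r$ separated by odd triangular gaps, as the Characterization Theorem allows. Second, for each chain $C_j$, let $T_j$ be the finite set of vertices adjacent to $C_j$ that lie in the unshaded triangles meeting $C_j$, excluding the vertices of $C_j$ itself. Since each vertex of a chain sits in two triangles, the triangles through $C_j$ cover $C_j$ and its neighbours. Using zero triples,
\[
\sum_{\triangle \ni v,\ v \in C_j} (\text{sum of }\triangle) = 0 .
\]
The aim is to show that a suitable choice of boundary triangles gives $\sum T_j = -\sum C_j$. Each vertex of $C_j$ should be counted in exactly one chosen triangle, so that $\sum T_j = -\sum C_j$ follows from each triangle summing to $0$. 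Third, set $S^- = \bigcup_j T_j$ and verify with the Characterization Theorem that $S^-$ is admissible. This means checking that it is again a union of chains and that consecutive pieces are separated by an odd number of triangles. Heuristically, passing from $S$ to $S^-$ shifts every boundary by one triangle on each side, so gap parities are preserved. Alternatively, I would check admissibility directly: $A\setminus S^-$ is covered by the chosen triangles through $C_j$ together with the old shaded triangles, with shading flipped on the region between.

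I expect the main obstacle to be the second step, namely selecting exactly one triangle per chain vertex so that the selected triangles are pairwise disjoint outside $C_j$. In a path, interior vertices have both triangles possibly containing other chain vertices. The chain edges themselves lie in triangles $\{u,v,w\}$ with $u,v \in C_j$, and using such a triangle negates $u+v$ at once via the single vertex $w$. I would first try induction on $|S|$ via the tree structure: pick the chain vertex of maximal index, which is a leaf of the branching, and pair it with its outward triangle. I would also use the flip operation of Proposition 2.4.5, which preserves sums and admissibility, to first push $S$ into a normal form, for instance chains moved as far outward as possible. The aim is for the negation to become a purely local replacement at the leaves. I would keep the small examples $\{a_0,a_1,a_3\}$ and $\{a_3,a_4,a_6,a_7\}$ in hand to test the construction. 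Finally, I must ensure that distinctness (each $a_k$ appearing once) prevents collisions between the $T_j$ for different chains. The odd gaps between chains should give this, since a gap of at least one triangle separates their neighbourhoods.
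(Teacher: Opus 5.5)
\textbf{Verdict: genuine gap.} Your guiding identity, negating an element by replacing it with the rest of a zero triple through it, is the one the paper uses. But the proposal stops short of a proof, and parts of the sketch are wrong.

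\begin{itemize}
\item \emph{First candidate.} Taking all non-$S$ vertices of the unshaded triangles meeting $S$ gives $-2s$, not $-s$. Every triangle meeting $S$ is unshaded, and each element of $S$ lies in two of them. For $S=\{a_0,a_1,a_3\}$ it gives $a_2+a_4+a_5+a_6+a_9+a_{10}=-2s$.
\item \emph{Step 2.} The refined object (one triangle per chain element, chosen triangles disjoint off $S$) is the right one. However, you leave its existence open and only list tactics: induction from the maximal index, or flips to a normal form.
\item \emph{Step 3.} This is only a heuristic, and your alternative description of the new ZTA is incorrect. The chosen triangles contain elements of $S^-$, so they must stay \emph{unshaded}. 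The old shaded triangles cannot be kept either: the shading that works changes the status of every triangle outside the chosen ones, not only those between chains.
\end{itemize}

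\textbf{How to close both steps.} Work in the dual tree: triangles are nodes and elements of $A$ are edges. Each element lies in exactly two triangles, and the construction of Theorem 2.4.3 makes this the $3$-regular tree.
\begin{itemize}
\item A chain, or any connected cluster of $S$, is a subtree. So its triangles split into two colour classes, with each of its elements in exactly one triangle of each class.
\item Let $X$ consist of one chosen class per chain, and let $S^-$ be the non-$S$ elements of triangles in $X$.
\item No non-$S$ element lies in two triangles of $X$. Both triangles would meet $S$ and hence be unshaded in the old ZTA, leaving that element uncovered.
\item Hence the triangles of $X$ are pairwise disjoint. Summing these zero triples gives $\sum S^-=-s$.
\item Also $|S^-|=3|X|-|S|$, so the size stays divisible by $3$. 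Membership in $\SR_3(A)$ needs this, though the paper's graphical admissibility omits it.
\item For $A\setminus S^-$, keep the triangles of $X$ unshaded and complement the old shading on every other triangle. Then each element of $S^-$ lies in two unshaded triangles. Each element of $S$ has its other triangle (unshaded before, since it meets $S$) now shaded. Every remaining element has both triangles complemented, so it still lies in exactly one shaded triangle.
\end{itemize}
No flips, normal forms, or induction are needed.

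\textbf{Comparison with the paper.} The paper makes a canonical, choice-free replacement: it negates every element of $S$ through its outward triangle. This needs no selection step. It is not an exact cover, though, when $S$ contains an element together with a vertex of that element's outward triangle: that vertex reappears in $S'$, and full triangles can occur, which the paper removes. The paper then proves admissibility by contradiction from the chain/gap characterization. Your colour-class choice, once supplied, gives $S^-\cap S=\varnothing$ and an explicit shading.
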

\begin{proof}
    Let $s \in \SR_3(A)$ with $S$ being the admissible set such that its sum is $s$. By Theorem 2.5.2, $S$ is composed of chains of vertices with triangular gaps of odd parity. Let $S'$ be the set of vertices achieved by negating every point in $S$ outwards; that is, taking each point in $S$ to its complementary pair of higher index. By construction, $\sum S' = -s$. Suppose $S'$ was not admissible. By reducing shading parity, we can assume $S'$ has two chains with no triangular gap. Then there are two adjacent triangles with vertices $a_i, a_{2i+3}, a_{2i+4}, a_{2(2i+4) + 3}, a_{2(2i+4)+4}$ given by  the construction in Theorem 2.4.3, which we will relabel as $s_1,\dots,s_5$ respectively. Note that $s_3$ is in both triangles, so it cannot be in $S'$. The higher-indexed chain $s_4$ and $s_5$ can only be achieved by negating $s_3$, so $s_3 \in S$, meaning both $s_4,s_5 \in S'$. If $s_2 \in S'$, then $s_1 \in S$. But negating $s_1$ and $s_3$ produces $\{s_2,s_3,s_4,s_5\}$, and $\{s_3,s_4,s_5\}$ forms a triangle, so we reduce it, a contradiction. If $s_1 = a_i \in S'$, then it has a vertex, say $a_{\frac{i-3}{2}}$, from a previous triangle that negates to it in $S$. But now this triangle and the triangle $\{s_1,s_2,s_3\}$ are adjacent and have chains with no gap, contradicting the admissibility of $S$. Therefore, $S'$ must be admissible.
\end{proof}

\subsection{Further Questions}
It is apparent that these assumptions, although useful, are very specific. The first natural place to look to generalize these arguments is $m$-complement sets for a general $m$. However, one important distinction is the fact that in a 2-complement set, a pair is mapped to a unique pair by flipping it. We lose this uniqueness for higher $m$.
\begin{question}
    What are the properties of admissible subsets for an $m$-complement set? What if each $a_k \in A$ is allowed to have a varying number of complements?
\end{question}
We can also look to examples of $kn$-convergence for $k > 3$. In these cases, the graph unit becomes a regular $k$-gon instead of a triangle. Note that in $3n$-convergence, any pair has a unique element that can complement it, which we lose for higher $k$.
\begin{question}
    Are the properties of 2-complement (or more generally $m$-complement) sets still the same for $kn$-convergence?
\end{question}
We are also working with the assumption that each $a_k$ appears exactly once, $A$ is uniformly separated, and $A$ is unbounded. Changing any or all of these factors is likely to produce different structures of the $3n$-sum range. In particular, it would be interesting to find an example where the $3n$-sum range is bounded, say from below. In any case, there is still much to explore here as we look to the goal of classifying all possible $3n$-sum ranges.

\bibliographystyle{plain}
\bibliography{References}

\end{document}

\typeout{get arXiv to do 4 passes: Label(s) may have changed. Rerun}